\newcommand{\overbar}[1]{\mkern 1.5mu\overline{\mkern-1.5mu#1\mkern-1.5mu}\mkern 1.5mu}
\newcommand{\cupdot}{\mathbin{\mathaccent\cdot\cup}}
\theoremstyle{definition}
\newtheorem{theorem}{Theorem}[section]
\newtheorem{lemma}[theorem]{Lemma}
\newtheorem{corollary}[theorem]{Corollary}
\newtheorem{conjecture}[theorem]{Conjecture}
\newtheorem{question}[theorem]{Question}
 \title{A note on the real part of complex chromatic roots}
    \author{Jason Brown \and Aysel Erey\footnote{Corresponding author, e-mail: aysel.erey@gmail.com}}
    \date{Department of Mathematics and Statistics\\ Dalhousie University \\ Halifax, Nova Scotia, Canada B3H 3J5 \\[\baselineskip] }
\begin{document}

\maketitle

\begin{abstract}
A {\em chromatic root} is a root of the chromatic polynomial of a graph. While the real chromatic roots have been extensively studied and well understood, little is known about the {\em real parts} of chromatic roots. It is not difficult to see that  the largest real chromatic root of a graph with $n$ vertices is $n-1$, and indeed, it is known that the largest real chromatic root of a graph is at most the tree-width of the graph. Analogous to these facts, it was conjectured in \cite{dongbook} that the real parts of chromatic roots are also bounded above by both $n-1$ and the tree-width of the graph.

In this article we show that for all $k\geq 2$ there exist infinitely many graphs $G$ with tree-width $k$ such that $G$ has non-real chromatic roots $z$ with $\Re(z)>k$. We also discuss the weaker conjecture and prove it for graphs $G$ with $\chi(G)\geq n-3$.
\end{abstract}

\thanks{\textit{Keywords}: chromatic number, chromatic polynomial, chromatic roots, real part
}

 \section{Introduction}
Let $G$ be a simple graph of order $n$ and size $m$, and let $\chi(G)$ denote the chromatic number of $G$.  The {\em chromatic polynomial} $\pi(G,x)$ of $G$ counts the number of proper colourings of the vertices with $x$ colours. If $z\in \mathbb{C}$ satisfies $\pi(G,z)=0$, then $z$ is called a {\em chromatic root} of $G$ (the chromatic roots of graphs of order $8$ are shown in Figure~\ref{fig:order8}). A trivial observation is that all of $0,1,\ldots,\chi(G)-1$ are chromatic roots -- the chromatic number is merely the first positive integer that is {\em not} a chromatic root. The Four Colour Theorem is equivalent to the fact that $4$ is never a chromatic root of a planar graph, and interest in chromatic roots began precisely from this connection. The roots of chromatic polynomials have subsequently received a considerable amount of attention in the literature. Chromatic polynomials also have strong connections to the Potts model partition function studied in theoretical physics, and the complex roots play an important role in statistical mechanics (see, for example, \cite{sokal}).

\begin{figure}
\centering
\includegraphics[width=2.5in]{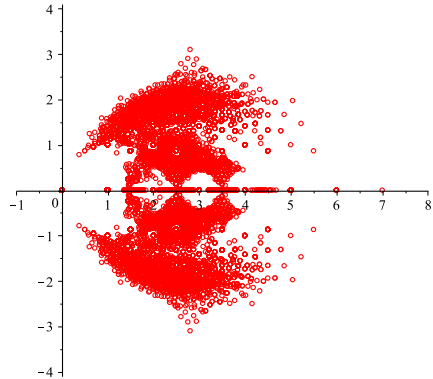}
\caption{Chromatic roots of graphs of order $8$.}
\label{fig:order8}
\end{figure}

A central problem has been to bound  the moduli of the chromatic roots in terms of graph parameters. There have been several results regarding this. Brown \cite{brownmonomial} showed that the chromatic roots of $G$ lie in $|z-1|\leq m-n+1$ and Sokal \cite{sokal} proved that the chromatic roots lie within $|z|\leq 7.963907\Delta(G)$, where $\Delta(G)$ is the maximum degree of the graph.

Another approach has been to study the {\em real} chromatic roots of graphs. It is not difficult to see that if $r$ is a real chromatic root of $G$ then $r\leq n-1$ with equality if and only if $G$ is a complete graph. In \cite{dong} it was proven that among all real chromatic roots of graphs with order $n \geq 9$, the largest non-integer real chromatic root is $\displaystyle{\frac{n-1-\sqrt{(n-3)(n-7)}}{2}}$, and  extremal graphs were determined. Moreover, Dong et.al \cite{dong koh 2, dong koh} showed that real chromatic roots are bounded above by $5.664\Delta(G)$ and max$\{\Delta(G), \lfloor n/3 \rfloor - 1\}$. 

The \textit{tree-width} of a graph $G$ is the minimum integer $k$ such that $G$ is a subgraph of a k-tree (given $q\in \mathbb{N}$, the class of $q${\em -trees} is defined recursively as follows: any complete graph $K_q$ is a $q$-tree, and any $q$-tree of order $n+1$ is a graph obtained from a $q$-tree $G$ of order $n$, where $n\geq q$, by adding a new vertex and joining it to each vertex of a $K_q$ in $G$). 
Thomassen \cite{thomassen}  proved that the real chromatic roots are bounded above by the tree-width of the graph. 

The problem of finding the largest {\em real part} of complex chromatic roots seems more difficult. In \cite{dongbook} the following conjectures on the real part of complex chromatic roots were proposed.

\begin{conjecture}\label{conjecture tree-width} \cite{dongbook}
Let $G$ be a graph with tree-width $k$. If $z$ is a root of $\pi(G,x)$ then $\Re(z)\leq k$.
\end{conjecture}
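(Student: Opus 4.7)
My plan would be to prove the conjecture by induction on $n = |V(G)|$, exploiting the recursive structure inherent in tree-width $k$. For the base case $n \leq k+1$, the graph $G$ is a subgraph of $K_{k+1}$, and one verifies directly that all its complex chromatic roots have real part at most $k$. For the inductive step, I would first look for a simplicial vertex $v$ of $G$ whose neighborhood $N(v)$ is a clique of size $j \leq k$. If such a vertex exists, then
\[
\pi(G, x) = (x - j)\,\pi(G - v, x),
\]
and since $G - v$ still has tree-width at most $k$, the inductive hypothesis gives $\Re(z) \leq k$ for all roots of $\pi(G - v, x)$, while the extra factor contributes only the real root $j \leq k$.

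Since not every graph of tree-width $k$ has a simplicial vertex of this type, the second step would be to reduce to the simplicial situation via a clique-sum decomposition: if $G = G_1 \cup G_2$ with $G_1 \cap G_2 = K_r$ and $r \leq k$, then
\[
\pi(G, x) = \frac{\pi(G_1, x)\, \pi(G_2, x)}{\pi(K_r, x)},
\]
and since $\pi(K_r, x) = x(x-1)\cdots(x-r+1)$ has only real integer roots bounded by $k-1$, cancelling them against factors of $\pi(G_i, x)$ leaves a polynomial whose roots still satisfy $\Re(z) \leq k$ by induction. Together, these two steps would settle the conjecture for the class of \emph{chordal} graphs of tree-width $k$, since such graphs admit recursive decompositions by simplicial vertices and clique-separators (Dirac's theorem).

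The hardest part, and the likely obstacle, is the passage from chordal graphs to arbitrary graphs of tree-width $k$. A natural approach is to embed $G$ in a chordal completion $G^{*}$ of the same tree-width, which is itself a $k$-tree, and attempt to pull bounds on the roots of $\pi(G^{*}, x) = x(x-1)\cdots(x-k+1)(x-k)^{n-k}$ back to $\pi(G, x)$ via iterated deletion--contraction $\pi(G, x) = \pi(G+e, x) + \pi((G+e)/e, x)$. However, this writes $\pi(G, x)$ as an alternating sum whose individual summands' roots may drift substantially in the complex plane, and I do not see any mechanism that forces the real parts of the resulting zeros of $\pi(G, x)$ to remain below $k$. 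Given the counterexamples announced in the abstract, I expect this gap is not merely a limitation of the method but reflects a genuine failure of the conjecture: a carefully chosen non-chordal subgraph of a $k$-tree should produce a complex chromatic root whose real part exceeds $k$, even though all real roots continue to obey Thomassen's bound.
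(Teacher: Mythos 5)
Your closing suspicion is the right one, and it is the decisive point: this statement is a conjecture that the paper \emph{refutes}, not proves, so no completion of your argument is possible for $k\geq 2$. The paper's Theorem~\ref{tree-width thm} exhibits exactly the kind of ``carefully chosen non-chordal subgraph of a $k$-tree'' you predicted: the complete bipartite graph $K_{p,q}$, which has tree-width $p$ for $q\geq p\geq 2$. For fixed $p$ and all sufficiently large $q$ it has a non-real chromatic root $z$ with $\Re(z)>p$. The mechanism is not deletion--contraction from a chordal completion (which, as you note, gives no control over root drift), but a quasi-stability test applied to the shifted polynomial $\pi(K_{p,q},x+p)$: using Whitney's broken-cycle expansion and the explicit coefficients of Lemma~\ref{lemmacompbip}, one computes the top coefficients $a_{n-2},a_{n-4}$ of the shifted polynomial, splits it into even and odd parts as in the Hermite--Biehler Theorem (Theorem~\ref{hermite}), and shows via Sturm's Theorem (Theorem~\ref{allrootsreal}) that the relevant part is not real-rooted, because $2na_{n-4}-(n-2)a_{n-2}^2$ is a quartic in $q$ with positive leading coefficient $\tfrac{1}{6}p(p-1)$ and hence eventually forces a negative leading coefficient in the Sturm sequence. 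Non-quasi-stability of $\pi(K_{p,q},x+p)$ then yields a root with real part exceeding $p$, which must be non-real by Thomassen's bound on real roots.

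Within your proposed proof itself, the parts you are confident about are indeed sound but only cover chordal graphs: a simplicial elimination order gives $\pi(G,x)=\prod_i(x-d_i)$ with each $d_i$ at most the tree-width, and the clique-sum identity $\pi(G,x)\pi(K_r,x)=\pi(G_1,x)\pi(G_2,x)$ shows every root of $\pi(G,x)$ is a root of $\pi(G_1,x)\pi(G_2,x)$, so induction goes through there. Two further caveats: your base case (``verify directly for all subgraphs of $K_{k+1}$'') is not actually routine --- for a non-complete graph of order $n\leq k+1$ the claim $\Re(z)\leq k$ is essentially Conjecture~\ref{conjecture real n-1} for that order, which is itself open and which the paper establishes only when $\chi(G)\geq n-3$ (Theorem~\ref{n-3 realpart thm}); and the inductive step genuinely cannot be repaired, since $K_{2,q}$ already violates the conjectured bound for tree-width $2$.
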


\begin{conjecture}\label{conjecture real n-1} \cite{dongbook}
Let $G$ be a graph of order $n$. If $z$ is a root of $\pi(G,x)$ then $\Re(z)\leq n-1$.
\end{conjecture}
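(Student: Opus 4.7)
The plan is to attack Conjecture~\ref{conjecture real n-1} by a double induction on the number of vertices $n$ and, for fixed $n$, on the number of edges $m$, using deletion-contraction as the main recursive tool. First I would dispose of the classical reductions. If $G$ is disconnected, or has a separating clique $K_q$, then by the product formula $\pi(G,x) = \pi(G_1,x)\pi(G_2,x)/\pi(K_q,x)$ (with $\pi(K_0,x)=1$ in the disconnected case) each factor lives on strictly fewer vertices, so the inductive hypothesis yields the strictly stronger bound $\Re(z) \le n-2$ for every chromatic root. Similarly, if $G$ has a simplicial vertex $v$ of degree $d$, then $\pi(G,x) = (x-d)\pi(G-v,x)$, and the roots are either the integer $d \le n-1$ or, inductively, roots of $\pi(G-v,x)$ with $\Re(z) \le n-2$. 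This handles chordal graphs, graphs obtained from cliques by iterated simplicial additions, and in particular all $q$-trees.

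For the inductive step on a graph not covered above, I would fix any edge $e$ and use
\[
\pi(G,x) \;=\; \pi(G-e, x) \;-\; \pi(G/e, x).
\]
By the inner induction on edges, $\pi(G-e, z) \ne 0$ for $\Re(z) > n-1$, since $G-e$ has $n$ vertices and $m-1$ edges. By the outer induction on vertex count, $\pi(G/e, z) \ne 0$ for $\Re(z) > n-2$, and so in particular for $\Re(z) > n-1$. The remaining task is to show that $\pi(G-e, z) \ne \pi(G/e, z)$ throughout the half-plane $\Re(z) > n-1$. The natural sufficient condition -- and the target I would aim for -- is the strict modular inequality
\[
\bigl|\pi(G-e, z)\bigr| \;>\; \bigl|\pi(G/e, z)\bigr| \qquad \text{for all } z \text{ with } \Re(z) > n-1.
\]
This is transparent in the extremal case $G = K_n$, where the ratio $\pi(K_n-e,x)/\pi(K_{n-1},x)$ simplifies to the linear factor $x-n+2$, whose modulus exceeds $1$ precisely when $\Re(x) > n-1$. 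A direct check on paths, cycles, and small complete bipartite graphs is consistent with the inequality.

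The hard part will be establishing this key modular inequality uniformly in $(G,e)$. In general, the ratio $\pi(G-e,x)/\pi(G/e,x)$ is a rational function whose numerator and denominator each have degree close to $n$, and controlling its modulus on $\Re(x) > n-1$ appears to require either a delicate half-plane analysis of the Whitney rank-generating expansion of $\pi(G,\cdot)$, or the identification of a strengthened induction hypothesis -- for example, a uniform half-plane lower bound on $|\pi(G,z)|/|\pi(G-v,z)|$ for some vertex $v$ -- whose inductive step remains tractable. A complementary angle is to combine Brown's bound $|z-1| \le m - n + 1$, which already delivers $\Re(z) \le n-1$ whenever $m \le 2n-3$, with the reductions above, thereby confining the genuinely open case to dense non-chordal graphs admitting no separating clique. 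It is precisely in that regime -- complementary to the $\chi(G) \ge n-3$ extreme addressed in this article -- that the conjecture has so far resisted proof, and a new global half-plane estimate seems necessary to close the gap.
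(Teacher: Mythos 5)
You are attempting to prove what is, in the paper, only a \emph{conjecture}: the article does not establish Conjecture~\ref{conjecture real n-1} in general, but only the special case $\chi(G)\ge n-3$ (Theorem~\ref{n-3 realpart thm}), and it does so by a completely different route -- writing $\pi(G,x)$ in factorial form $\sum a_i(x)_{\downarrow i}$, shifting to $\pi(G,x+n-1)$, factoring off the linear terms $(x+n-i)$, and then applying the explicit Routh--Hurwitz stability tests of Theorem~\ref{stability test} to the residual quadratic or cubic $f(G,x)$, with the coefficient inequalities deduced from the combinatorial interpretation of the $a_{n-i}$'s via Lemmas~\ref{product-union} and~\ref{same order comparison}. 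So there is no ``paper's own proof'' of the full statement against which your argument can be matched; the most one can say is that your outline and the paper's partial result are disjoint in method.

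As a proof, your proposal has an acknowledged but fatal gap. The reductions you list are all sound -- the clique-sum product formula drops the order by at least one, the simplicial-vertex factorization $\pi(G,x)=(x-d)\pi(G-v,x)$ is correct, and Brown's disc $|z-1|\le m-n+1$ indeed disposes of $m\le 2n-3$ -- but the entire weight of the argument rests on the unproven modular inequality $|\pi(G-e,z)|>|\pi(G/e,z)|$ on $\Re(z)>n-1$, which you propose as a ``target'' rather than establish. You should also be aware that this inequality, as stated, is extremely delicate: your own $K_n$ computation shows the ratio collapses to $x-n+2$, whose modulus equals $1$ exactly at $z=n-1$, so the inequality is sharp on the boundary. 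More seriously, your inductive hypothesis only guarantees $\pi(G-e,\cdot)$ is nonvanishing on the \emph{open} half-plane $\Re(z)>n-1$; if $\pi(G-e,\cdot)$ has a root on, or accumulating to, the line $\Re(z)=n-1$, then $|\pi(G-e,z)|$ becomes arbitrarily small there while nothing in the induction forces $|\pi(G/e,z)|$ to be small as well, so there is no reason the inequality should hold uniformly near the boundary. In short, the inequality you would need is in essence a quantitative strengthening of the conjecture itself, and postulating it as a sufficient condition does not reduce the problem. What you have is a plausible research program, not a proof, and you are right to say so at the end.
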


It is clear that the Conjecture \ref{conjecture real n-1} is weaker than Conjecture \ref{conjecture tree-width}. In this work, first we present infinitely many counterexamples to Conjecture \ref{conjecture tree-width} for every $k\geq 2$ (Theorem \ref{tree-width thm}). Then, we consider Conjecture \ref{conjecture real n-1} and prove it for all graphs $G$ with $\chi(G)\geq n-3$ (Theorem \ref{n-3 realpart thm}). (Our numerical computations suggest that graphs which have large chromatic number are more likely to have  chromatic roots whose real parts are close to $n$.) 

\section{Main Results}

A polynomial $f(x)$ in $\mathbb{C}[x]$ is called  \textit{Hurwitz quasi-stable} or just {\textit quasi-stable} (resp. \textit{Hurwitz stable} or just {\textit stable})
if every $z\in \mathbb{C}$ such that $f(z) = 0$ satisfies $\Re(z) \leq 0$ (resp.  $\Re(z) < 0$). Observe that  $z$ is a root of $f(x)$ if and only if $z-c$ is a root of $f(x+c)$, so that every root $z$ of a  polynomial $f(x)$ satisfies $\Re(z) \leq c$ (resp. $\Re(z)<c$) if and only if the polynomial $f(x+c)$ is quasi-stable (resp. stable). Thus, bounding the real parts of roots of polynomials is closely related to the Hurwitz stability of polynomials. In the sequel, we will make use of this observation to prove both of our main results.

\subsection{Treewidth and the real part of complex chromatic roots}

It is not difficult to see that the tree-width of the complete bipartite graph $K_{p,q}$ is equal to $\operatorname{min}(p,q)$, and our counterexamples to Conjecture~\ref{conjecture tree-width} will be these graphs. Note that this conjecture clearly holds for $k=1$ since the tree-width of a graph is equal to $1$ if and only if the graph is a tree. Hence, our counterexamples are for $p\geq 2$.

We shall make use of a particular expansion of the chromatic polynomial. Let $G$ be a graph of order $n$ and size $m$. Suppose that $\beta: E(G) \rightarrow  \{1,2,\dots ,m \}$ is a bijection and $C$ a cycle in $G$. If $C$ has an edge $e$ such that $\beta(e)> \beta(e')$ for any $e'$ in $E(C)-\{e\}$ then  the path $C-e$ is called  a \textit{broken cycle} in $G$ with respect to $\beta$. Whitney's Broken-Cycle Theorem (see, for example, \cite{dongbook}) states that 
$$\pi(G,x)=\sum_{i=1}^{n}(-1)^{n-i}h_i(G)x^i, $$
where $h_i(G)$ is the number of spanning subgraphs of $G$ that have exactly $n-i$ edges and that contain no broken cycles with respect to $\beta$.

\begin{figure}

\centering
\includegraphics[scale=0.75]{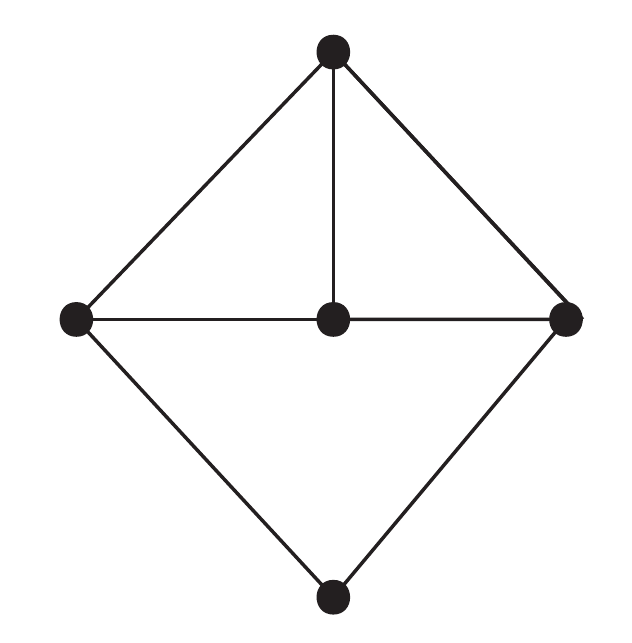}
\caption{The graph H}
\label{fig:treewidth}
\end{figure}

For two graphs $H$ and $G$, we denote by $\eta_G(H)$ (resp. $i_G(H)$) the number of subgraphs (respectively induced subgraphs) of $G$ which are isomorphic to $H$. For example, for the graph $H$ in Figure \ref{fig:treewidth}, we have $\eta_H(K_3)=i_H(K_3)=2$, $\eta_H(2K_2)=8$ and $i_H(2K_2)=0$. The following result gives formulas for the first few coefficients of the chromatic polynomial by counting certain (induced) subgraphs of the graph.

\begin{theorem}\label{chromaticcoeff}\cite[pg. 31-32]{dongbook} Let $G$ be a graph of order $n$ and size $m$, and let $g$ be the girth of the graph. Then 
$$\pi(G,x)=\sum_{i=1}^{n}(-1)^{n-i}h_i(G)x^i $$
is a polynomial in $x$ such that 
\begin{eqnarray*} 
h_{n-i} & = & {m \choose i} \mbox{ for } 0\leq i\leq g-2,\\
h_{n-g+1} & = & {m \choose g-1}-\eta_G(C_g),\\
h_{n-3} & = & {m \choose 3}-(m-2)\eta_G(K_3)-i_G(C_4)+2\eta_G(K_4), \mbox{ and}\\
h_{n-4} & = & {m \choose 4}-{m-2 \choose 2}\eta_G(K_3)+{\eta_G(K_3) \choose 2}-(m-3)i_G(C_4)-(2m-9)\eta_G(K_4)\\
 & & -i_G(C_5)+i_G(K_{2,3})+2i_G(H)+3i_G(W_5)-6\eta_G(K_5),
\end{eqnarray*}
where $H$ is the graph shown in Figure \ref{fig:treewidth} and $W_5$ is the wheel of order $5$.
\end{theorem}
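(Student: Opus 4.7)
The plan is to invoke Whitney's Broken-Cycle Theorem with an arbitrary fixed bijection $\beta\colon E(G) \to \{1,\ldots,m\}$, which reduces every case to counting $i$-edge spanning subgraphs of $G$ that contain no broken cycle with respect to $\beta$. The first two formulas are immediate. Every cycle has length at least $g$, hence every broken cycle has at least $g-1$ edges, so no $i$-edge subgraph with $i \le g-2$ can contain one, yielding $h_{n-i} = \binom{m}{i}$. When $i = g-1$ a subgraph contains a broken cycle only if it is one; each $g$-cycle of $G$ yields a unique broken cycle by removal of its $\beta$-largest edge, so $h_{n-g+1} = \binom{m}{g-1} - \eta_G(C_g)$.

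For the remaining two formulas I would run Bonferroni inclusion-exclusion. Writing $b(F)$ for the number of broken cycles contained in $F$, the identity $\mathbf{1}[b(F) = 0] = \sum_{j \ge 0}(-1)^j\binom{b(F)}{j}$ gives
$$h_{n-i} = \sum_{j \ge 0}(-1)^j N_j, \qquad N_j := \sum_{F:\,|F|=i}\binom{b(F)}{j}.$$
Only broken cycles of length $\le i$ contribute: for $i=3$ these come from the $\eta_G(K_3)$ triangles and $\eta_G(C_4)$ four-cycles of $G$, and for $i=4$ also from $\eta_G(C_5)$ five-cycles. A broken $k$-cycle has $k-1$ edges and so extends to $\binom{m-k+1}{i-k+1}$ spanning $i$-edge subgraphs; this produces the first-order terms $-N_1 = -(m-2)\eta_G(K_3) - \eta_G(C_4)$ for $i = 3$ and $-N_1 = -\binom{m-2}{2}\eta_G(K_3) - (m-3)\eta_G(C_4) - \eta_G(C_5)$ for $i = 4$.

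The main obstacle is evaluating the higher-order corrections $\sum_{j \ge 2}(-1)^j N_j$. For $h_{n-3}$ there are exactly two patterns of two broken cycles coexisting on three edges: two broken triangles sharing an edge (whose underlying triangles must share an edge of $G$, and hence span a diamond or a $K_4$), or a broken triangle contained inside a broken $4$-cycle (possible only when the $4$-cycle has a chord). Tallying the contributions while tracking which edge of each cycle is $\beta$-largest, the chord cancellation converts the $-\eta_G(C_4)$ in $-N_1$ into $-i_G(C_4)$ and the triangle-pair contributions collapse to the $+2\eta_G(K_4)$ term, giving the stated formula. The same strategy works for $h_{n-4}$, but with many more multi-cycle patterns on four edges. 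I would perform a case analysis by the isomorphism type of the subgraph of $G$ spanned by each configuration, identifying its contribution with precisely one of $K_3$, induced $C_4$, $K_4$, induced $C_5$, $K_{2,3}$, the graph $H$, $W_5$, or $K_5$, and verifying that the alternating sum yields exactly the stated coefficients. This final enumeration, keeping careful track of the $\beta$-orderings that actually produce each broken cycle, is the core combinatorial work.
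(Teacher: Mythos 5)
Your treatment of the first two formulas is fine and coincides with what the paper itself says (it does not reprove this theorem: it quotes it from \cite[pg. 31--32]{dongbook}, noting that the first two items are immediate from Whitney's Broken-Cycle Theorem and that the $h_{n-3}$ and $h_{n-4}$ expressions are due to Farrell \cite{farrell}). The gap is in everything after that. Your Bonferroni set-up and the first-order terms $N_1$ are correct, but the entire content of the $h_{n-3}$ and, above all, the $h_{n-4}$ formula sits in the higher-order terms $\sum_{j\geq 2}(-1)^jN_j$, and your proposal stops exactly where that work begins: ``I would perform a case analysis \dots and verify'' is a promissory note, not a proof. None of the characteristic coefficients -- the conversion of $\eta_G(C_4)$ and $\eta_G(C_5)$ into the induced counts $i_G(C_4)$ and $i_G(C_5)$, the global term $\binom{\eta_G(K_3)}{2}$, the coefficients $-(2m-9)\eta_G(K_4)$, $+i_G(K_{2,3})$, $+2i_G(H)$, $+3i_G(W_5)$, $-6\eta_G(K_5)$ -- is actually derived, and these are precisely what the theorem asserts.

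Moreover, the step you defer is genuinely delicate, not routine bookkeeping: unlike $N_1$, the counts $N_j$ for $j\geq 2$ depend on the chosen edge ordering $\beta$ (for instance, whether the broken paths of two triangles sharing an edge fit together inside a $3$- or $4$-edge set depends on which edge of each triangle is $\beta$-maximal), and only the alternating sum is $\beta$-independent. So the ``chord cancellations'' you invoke for $h_{n-3}$, and the much larger configuration analysis for $h_{n-4}$ (where configurations of up to four interacting broken cycles on four edges must be classified by the isomorphism type and induced structure of their span), require tracking these $\beta$-dependent contributions and showing they cancel correctly -- this is essentially Farrell's computation, and without it the proof is incomplete. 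A correct completion would either carry out that enumeration in full or simply cite Farrell for the last two identities, as the paper does.
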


\vspace{0.25in}
The first two items of Theorem \ref{chromaticcoeff} follow immediately from Whitney's Broken-Cycle Theorem and the expressions for $h_{n-3}$ and $h_{n-4}$ were obtained by Farrell in \cite{farrell}. A direct application of the previous result yields explicit formulas for the first few coefficients of the chromatic polynomials of complete bipartite graphs.

\begin{lemma}\label{lemmacompbip}Let  $p,q\geq 2$, $n=p+q$ and $\pi(K_{p,q},x)=\sum_{i=1}^{n}(-1)^{n-i}h_i(G)x^i, $ then
\begin{eqnarray*}
h_n & =& 1,\\
h_{n-1} & = & pq,\\
h_{n-2} & = & {pq \choose 2},\\
h_{n-3} & = & {pq \choose 3}-{q\choose 2}{p\choose 2}, \mbox{ and }\\
h_{n-4} & = & {pq \choose 4}-(pq-3){q\choose 2}{p\choose 2}+{q\choose 2}{p\choose 3}+{p\choose 2}{q\choose 3}. 
\end{eqnarray*}
\[\tag*{\qed}\]
\end{lemma}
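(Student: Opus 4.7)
The plan is to apply Theorem~\ref{chromaticcoeff} directly with $G = K_{p,q}$, substituting in the values $n = p+q$, $m = pq$, and the girth $g = 4$ (since $K_{p,q}$ is bipartite with $p,q\geq 2$). The formulas for $h_n$, $h_{n-1}$, and $h_{n-2}$ fall out immediately from the first line of Theorem~\ref{chromaticcoeff} since $g-2 = 2$, giving $h_{n-i} = \binom{pq}{i}$ for $i=0,1,2$.

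For $h_{n-3}$ and $h_{n-4}$, the first step is to identify which of the subgraph-count terms vanish. Since $K_{p,q}$ is triangle-free, $\eta_G(K_3) = \eta_G(K_4) = \eta_G(K_5) = 0$. Because it is bipartite it contains no odd cycles, so $i_G(C_5) = 0$. The graph $H$ in Figure~\ref{fig:treewidth} and the wheel $W_5$ both contain triangles, so $i_G(H) = i_G(W_5) = 0$ as well. This kills every term in the expressions for $h_{n-3}$ and $h_{n-4}$ except those involving $\binom{m}{i}$, $i_G(C_4)$, and $i_G(K_{2,3})$.

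The remaining step is to count the induced $4$-cycles and induced $K_{2,3}$'s in $K_{p,q}$. In a complete bipartite graph, any choice of two vertices from one side and two from the other induces a $C_4$, and conversely every induced $C_4$ arises this way, so $i_G(C_4) = \binom{p}{2}\binom{q}{2}$. Similarly, choosing two vertices from one side and three from the other induces a $K_{2,3}$, yielding $i_G(K_{2,3}) = \binom{p}{2}\binom{q}{3} + \binom{p}{3}\binom{q}{2}$. Substituting these into the formula for $h_{n-3}$ gives $\binom{pq}{3} - \binom{p}{2}\binom{q}{2}$, and substituting into the formula for $h_{n-4}$ gives $\binom{pq}{4} - (pq-3)\binom{p}{2}\binom{q}{2} + \binom{p}{2}\binom{q}{3} + \binom{p}{3}\binom{q}{2}$, matching the stated expressions.

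There is no real obstacle here; the argument is essentially bookkeeping, and the only mild subtlety is being confident that every 4-cycle in a bipartite graph is induced (so that subgraph and induced-subgraph counts of $C_4$ coincide) and that the auxiliary graphs $H$ and $W_5$ can be discarded because they contain triangles.
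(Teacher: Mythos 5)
Your proposal is correct and is exactly the paper's argument: the lemma is stated there as a ``direct application'' of Theorem~\ref{chromaticcoeff} to $K_{p,q}$ with $m=pq$ and girth $4$, discarding the triangle-containing and odd-cycle terms and counting $i_G(C_4)=\binom{p}{2}\binom{q}{2}$ and $i_G(K_{2,3})=\binom{p}{2}\binom{q}{3}+\binom{p}{3}\binom{q}{2}$, just as you do. Your remark that every $4$-cycle in a bipartite graph is induced correctly reconciles the $\eta_G(C_g)$ and $i_G(C_4)$ counts, so nothing is missing.
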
 

\vspace{0.25in}

A polynomial is called \textit{standard} if it is either identically zero or has positive leading coefficient, and is said to have \textit{only nonpositive roots} if it is either identically zero or has all of its roots real and nonpositive. Suppose that $f,g\in \mathbb{R}[x]$ both have only real zeros, that those of $f$ are $\zeta_1 \leq \dots \leq \zeta_a $ and that those of $g$ are $\theta_1 \leq \dots \theta_b$. We say that \textit{f interlaces g} if
$\mbox{deg }g = 1+\mbox{deg }f$ and the zeros of $f$ and $g$ satisfy $\theta_1 \leq \zeta_1 \leq \theta_2 \leq \dots \leq \zeta_a \leq \theta_{a+1}$.
We also say that \textit{f alternates left of g} if $\mbox{deg } f = \mbox{deg } g$ and the zeros of $f$ and
$g$ satisfy $\zeta_1 \leq \theta_1 \leq \zeta_2 \leq \dots \leq \zeta_a \leq \theta_a $. The notation $f \prec g$ stands for either
$f$ interlaces $g$ or $f$ alternates left of $g$. The following result which is known as Hermite-Biehler Theorem (see \cite{wagner}) characterizes  Hurwitz quasi-stable  polynomials via interlacing property.

\begin{theorem}[Hermite-Biehler Theorem]\label{hermite} Let $f(x)\in \mathbb{R}[x]$ be standard, and write $f(x)=f^e(x^2)+xf^o(x^2)$. Set $t=x^2$. Then $f(x)$ is Hurwitz quasi-stable if and only if both $f^e(t)$ and $f^o(t)$ are standard, have only nonpositive zeros, and $f^o(t)\prec f^e(t)$.
\end{theorem}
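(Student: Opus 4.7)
The Hermite--Biehler theorem is classical, and my plan is to reduce Hurwitz quasi-stability of $f$ to an interlacing condition on two real polynomials obtained by restricting $f$ to the imaginary axis, and then to transport that condition through the substitution $t = x^2$ to the even and odd parts $f^e$ and $f^o$.

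First I would substitute $x = iy$ with $y$ real, yielding
$$f(iy) = f^e(-y^2) + iy\,f^o(-y^2) = P(y) + iQ(y),$$
where $P(y) := f^e(-y^2)$ and $Q(y) := y\,f^o(-y^2)$ are real polynomials in $y$, the former even and the latter odd. The analytic heart of the argument (provable by a winding-number computation on a large semicircle in the right half-plane) is the equivalence: $f$ is Hurwitz quasi-stable if and only if $P$ and $Q$ both have only real zeros, these zeros interlace weakly, and the leading coefficients of $P$ and $Q$ have the appropriate compatible sign. Intuitively, as $y$ runs along the imaginary axis the argument of $f(iy)$ must increase by $\pi$ per root of $f$ in the closed left half-plane, and this monotonicity forces the sign changes of $P$ and $Q$ to alternate.

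Next I would pull the interlacing of $P$ and $Q$ back through $t = y^2$. A zero $\zeta$ of $f^e(t)$ contributes the pair $\pm\sqrt{-\zeta}$ to the zero set of $P(y)$, so $P$ has only real zeros if and only if every zero of $f^e$ is nonpositive; the analogous analysis of $Q$, accounting for the extra factor $y$, yields the nonpositivity of the zeros of $f^o$. The standardness of $f$ ensures that $f^e$ and $f^o$ inherit positive leading coefficients, so both are standard. Finally, folding the weak interlacing of the real zero sequences of $P$ and $Q$ back through the squaring map $t = y^2$ translates precisely into the relation $f^o(t) \prec f^e(t)$ as defined just before the theorem statement, with the two sub-cases (``interlaces'' vs.\ ``alternates left of'') selected by the parity of $\deg f$.

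The main obstacle I expect is the careful accounting at the special points and the translation between the two interlacing conventions: handling the zero at $y = 0$ that the factor of $y$ in $Q$ introduces, tracking signs so that both $f^e$ and $f^o$ come out standard rather than differing by a sign, and keeping straight which sequence has which length. The quasi-stable versus stable distinction is exactly the difference between weak and strict interlacing on the $y$-axis, where shared zeros at the origin correspond to purely imaginary roots of $f$; so the quasi-stable case is precisely where the technical care around boundary multiplicities is required.
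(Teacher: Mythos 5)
The paper does not prove this statement at all: it is quoted as a known classical result, cited to Wagner's paper \cite{wagner}, and used as a black box in the proof of Theorem~\ref{tree-width thm}. So the only comparison available is with the standard classical argument, and your outline does follow that standard route: restrict $f$ to the imaginary axis, write $f(iy)=P(y)+iQ(y)$ with $P(y)=f^e(-y^2)$, $Q(y)=yf^o(-y^2)$, characterize quasi-stability by real-rootedness and interlacing of $P$ and $Q$, then push the condition through $t=y^2$.

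As a proof, however, your write-up has a genuine gap: the entire analytic content is concentrated in the sentence you label ``the analytic heart,'' namely the equivalence between quasi-stability of $f$ and the interlacing of $P$ and $Q$, and you only assert it with a winding-number heuristic. That equivalence \emph{is} the Hermite--Biehler theorem in its usual formulation, so deferring it leaves essentially nothing proved; moreover, in the quasi-stable (as opposed to stable) case the argument-principle computation is exactly where the difficulty lies, since roots of $f$ on the imaginary axis are common zeros of $P$ and $Q$ and the argument of $f(iy)$ is not even defined there, so one must first factor out such roots or perturb, and your sketch never does this. A second, concrete error: you claim ``the standardness of $f$ ensures that $f^e$ and $f^o$ inherit positive leading coefficients.'' That is false; $f(x)=x^2-x+1$ is standard but $f^o(t)=-1$ is not. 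In the ``only if'' direction the standardness of the lower of the two parts comes from quasi-stability (a standard quasi-stable real polynomial has all coefficients nonnegative), not from standardness of $f$ alone, and this needs to be argued. Finally, the translation of the folded interlacing into $f^o\prec f^e$, including the role of the zero of $Q$ at $y=0$ and the parity of $\deg f$ selecting ``interlaces'' versus ``alternates left of,'' is named as an obstacle but not carried out; as it stands the proposal is a correct road map of the classical proof rather than a proof.
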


The \textit{Sturm sequence} of a real polynomial $f(t)$ of positive degree is a sequence of polynomials $f_0, f_1, f_2\dots$, where $f_0=f$, $f_1=f'$, and, for $i\geq 2$, $f_i=-\mbox{rem}(f_{i-1},f_{i-2})$, where $\mbox{rem}(h,g)$ is the remainder upon dividing $h$ by $g$. The sequence is terminated at the last nonzero $f_i$. The Sturm sequence of $f$ has \textit{gaps in degree} if there exist integers $j\leq k$ such that $\mbox{deg~}f_j<\mbox{deg~}f_{j-1}-1.$ Sturm's well known theorem (see, for example, \cite{brownrealpart}) is the following:

\begin{theorem}[Sturm's Theorem]\label{allrootsreal} Let $f(t)$ be a real polynomial whose degree and leading coefficient are positive. Then $f(t)$ has all real roots if and only if its Sturm sequence has no gaps in degree and no negative leading coefficients.
\end{theorem}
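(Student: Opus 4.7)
The plan is to combine two classical ingredients: the standard Sturm theorem, which asserts that for a real polynomial $f$ with Sturm sequence $(f_0,f_1,\ldots,f_s)$, the number of distinct real roots of $f$ equals $V(-\infty)-V(+\infty)$, where $V(x)$ counts sign changes in $(f_0(x),\ldots,f_s(x))$ (ignoring zeros); and the Euclidean-algorithm identification of the last nonzero term $f_s$ with a nonzero real scalar multiple of $\gcd(f,f')$, so that $\deg f_s = n-d$, where $d$ is the number of distinct complex roots of $f$.

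For the $(\Leftarrow)$ direction, suppose the Sturm sequence of $f$ has $s+1$ terms with no gaps in degree and no negative leading coefficients. Then $\deg f_i = n-i$, and at $+\infty$ every $f_i$ has positive sign, giving $V(+\infty)=0$. At $-\infty$ the sign of $f_i$ is $(-1)^{n-i}$, alternating with $i$, so $V(-\infty)=s$. Sturm's classical theorem then gives exactly $s$ distinct real roots of $f$, while $\deg f_s = n-s$ combined with $f_s \propto \gcd(f,f')$ forces $f$ to have exactly $s$ distinct complex roots in total. Hence every root of $f$ is real.

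For the $(\Rightarrow)$ direction, I would first reduce to the case that $f$ is square-free: if $h := \gcd(f,f')$ is non-constant, a separate analysis of the Euclidean algorithm shows that the gap-and-sign structure of the Sturm sequence of $f$ matches that of its square-free part $g := f/h$, which has positive leading coefficient and distinct real roots. In the square-free case, I would induct along the Sturm sequence using interlacing. The base step is Rolle's theorem: $f_1 = f'$ has all real roots strictly interlacing those of $f_0$, written $f_1 \prec f_0$. The inductive step is the classical lemma: if $f_i \prec f_{i-1}$ (strict interlacing, positive leading coefficients, all real roots, $\deg f_{i-1} = \deg f_i + 1$), then $f_{i+1} := -\operatorname{rem}(f_{i-1},f_i)$ satisfies $\deg f_{i+1} = \deg f_i - 1$, has positive leading coefficient, and $f_{i+1} \prec f_i$. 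To see this, observe that $f_{i+1}(\zeta) = -f_{i-1}(\zeta)$ at every root $\zeta$ of $f_i$; strict interlacing makes $f_{i-1}$ alternate in sign at these $\deg f_i$ points, so $f_{i+1}$ has at least $\deg f_i - 1$ real roots interlacing those of $f_i$, and the degree bound $\deg f_{i+1} < \deg f_i$ forces $\deg f_{i+1} = \deg f_i - 1$ exactly. The sign of the leading coefficient of $f_{i+1}$ is pinned down by evaluating to the right of the largest root of $f_i$ and using positivity of the leading coefficients of $f_{i-1}$ and $f_i$.

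The main obstacle I expect is the square-free reduction at the start of the $(\Rightarrow)$ direction: one must track carefully how multiplicities in $f$ interact with the Euclidean remainders in order to verify that the gap-and-sign pattern of the Sturm sequence of $f$ is inherited from that of $g = f/\gcd(f,f')$. The inductive interlacing step itself is classical but demands disciplined book-keeping of leading coefficients.
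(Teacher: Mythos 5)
The paper itself offers no proof of this statement: it is quoted as a known form of Sturm's theorem with a citation to the literature, so your proposal has to stand on its own. Its $(\Leftarrow)$ direction does: with no gaps you get $\deg f_i=n-i$, the positive leading coefficients give $V(+\infty)=0$ and $V(-\infty)=s$, the root-counting form of Sturm's theorem (valid for non-square-free $f$ with the canonical negative-remainder sequence) gives $s$ distinct real roots, and $\deg f_s=n-s$ together with $f_s$ being a constant multiple of $\gcd(f,f')$ shows $f$ has only $s$ distinct roots in all; hence every root is real. Likewise the square-free half of $(\Rightarrow)$ — Rolle for the base step, then the classical induction using $f_{i+1}(\zeta)=-f_{i-1}(\zeta)$ at the roots $\zeta$ of $f_i$ to get strict interlacing, exact degree drop, and a positive leading coefficient — is the standard argument and is sound.

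The genuine gap is exactly the step you flag and leave as a black box: the reduction to the square-free case. As phrased, ``the gap-and-sign structure of the Sturm sequence of $f$ matches that of its square-free part $g=f/h$'' is not a fact one can invoke, and its naive reading is false: the Sturm sequence of $g$ starts from $(g,g')$, whereas dividing the Sturm sequence of $f$ by $h$ starts from $(g,\,f'/h)$, and $f'/h$ is in general not proportional to $g'$ (for $f=(t-1)^2(t+1)$ one has $h=t-1$, $g'=2t$, but $f'/h=3t+1$). The correct repair is different from ``compare with the Sturm sequence of $g$'': (i) since $h$ divides both $f_0=f$ and $f_1=f'$, and each $f_{i+1}=q_if_i-f_{i-1}$ is a polynomial combination of its two predecessors, induction gives $h\mid f_i$ for all $i$; writing $f_i=h\,u_i$ with $h$ monic, the $u_i$ form precisely the negative-remainder sequence started from $(g,\,f'/h)$, and the degree gaps and leading-coefficient signs of $(f_i)$ coincide with those of $(u_i)$; (ii) when all roots of $f$ are real, counting the roots of $f'$ (Rolle supplies one strictly between consecutive distinct roots of $f$, and each root of multiplicity $m_j$ contributes $m_j-1$) shows $u_1=f'/h$ has degree $d-1$ where $d=\deg g$, positive leading coefficient, only real roots strictly interlacing those of $g$, and no root in common with $g$. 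With $(g,\,f'/h)$ as the base pair, your interlacing induction then applies verbatim to $(u_i)$ and yields the conclusion for $(f_i)$. Without (i) and (ii), or equivalent bookkeeping, the $(\Rightarrow)$ direction is unproved precisely for polynomials with repeated roots — the case in which the sequence terminates early and the statement's ``no gaps'' condition is doing real work.
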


We are now ready to show that for complete bipartite graphs a chromatic root with real part greater than its tree width.

\begin{theorem}\label{tree-width thm}Suppose that $p\geq 2$ is fixed. Then, $\pi(K_{p,q})$ has a non-real root $z$ with $\Re(z)>p$ for all sufficiently large  $q$.
\end{theorem}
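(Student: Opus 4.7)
The plan is to set $f(x):=\pi(K_{p,q},x+p)$, so that the roots of $f$ are precisely the chromatic roots of $K_{p,q}$ shifted by $-p$, and prove that $f$ fails to be Hurwitz quasi-stable for $q$ sufficiently large. Since $K_{p,q}$ has tree-width $p$, Thomassen's theorem forces every real chromatic root of $K_{p,q}$ to lie in $(-\infty,p]$, so every real root of $f$ lies in $(-\infty,0]$. Therefore any root $w$ of $f$ with $\Re(w)>0$ is automatically non-real, and $z:=w+p$ is then a non-real chromatic root of $K_{p,q}$ with $\Re(z)>p$, which is the desired conclusion.

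Write $f(x)=f^e(x^2)+xf^o(x^2)$. By the Hermite--Biehler theorem (Theorem~\ref{hermite}), quasi-stability of $f$ forces both $f^e$ and $f^o$ to have only nonpositive real roots. Letting $c_i$ denote the coefficient of $x^i$ in $f$, a parity check shows that exactly one of $f^e,f^o$---namely $f^e$ when $n=p+q$ is even and $f^o$ when $n$ is odd---is monic of degree $d:=\lfloor n/2\rfloor$ with top three coefficients $1,\,c_{n-2},\,c_{n-4}$. Denote this monic polynomial by $g$; I aim to show $g$ fails to have all real roots for $q$ large, which by Hermite--Biehler suffices.

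Polynomial long division gives the second entry of the Sturm sequence of $g$ as $g_{2}(t)=-\operatorname{rem}(g(t),g'(t))$, with leading coefficient
\[
\frac{(d-1)\,c_{n-2}^{2}\,-\,2d\,c_{n-4}}{d^{2}}.
\]
If this is negative, then by Sturm's theorem (Theorem~\ref{allrootsreal}) $g$ does not have all real roots, Hermite--Biehler yields that $f$ is not quasi-stable, and the argument is complete. The task therefore reduces to evaluating $c_{n-2}$ and $c_{n-4}$ for large $q$. From the binomial identity $c_{n-k}=\sum_{j=0}^{k}(-1)^{j}h_{n-j}\binom{n-j}{k-j}p^{k-j}$ together with Lemma~\ref{lemmacompbip}, a direct expansion gives $c_{n-2}=\tfrac{1}{2}p(p-1)(p^{2}+q)$, which is linear in $q$. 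For $k=4$ each of the five summands contributes a $q^{4}$-term, but these cancel identically (forced by $\sum_{j=0}^{4}(-1)^{j}\binom{4}{j}=0$), and after a careful tally of the $q^{3}$ contributions one obtains $c_{n-4}=\tfrac{1}{12}p(p-1)\,q^{3}+O(q^{2})$.

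Substituting these asymptotics and using $d\sim q/2$, one has $(d-1)c_{n-2}^{2}=\Theta(q^{3})$ and $2d\,c_{n-4}=\Theta(q^{4})$ as $q\to\infty$, so the displayed numerator is negative for all sufficiently large $q$. The main obstacle is carrying out the $q^{3}$ tally in the expansion of $c_{n-4}$ cleanly: the $q^{4}$ cancellation is automatic, but extracting the correct $q^{3}$ coefficient $p(p-1)/12$ requires tracking all five summands above, including the subgraph-count corrections inside $h_{n-3}$ and $h_{n-4}$ from Lemma~\ref{lemmacompbip}.
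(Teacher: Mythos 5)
Your proposal is correct and takes essentially the same route as the paper: shift by $p$, apply Hermite--Biehler to the even/odd part according to the parity of $n$, show the second Sturm remainder has negative leading coefficient for all large $q$, and invoke Thomassen's tree-width bound to conclude the root is non-real. Your key quantities check out against the paper's computation---$c_{n-2}=\tfrac12 p(p-1)(p^{2}+q)$ exactly, $c_{n-4}=\tfrac{1}{12}p(p-1)q^{3}+O(q^{2})$, consistent with the paper's leading coefficient $\tfrac16 p(p-1)$ of $2na_{n-4}-(n-2)a_{n-2}^{2}$---so your criterion $(d-1)c_{n-2}^{2}-2d\,c_{n-4}<0$ does hold for all sufficiently large $q$.
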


\begin{proof}Set $n=p+q$ and $\pi(K_{p,q},x)=\sum_{i=1}^{n}(-1)^{n-i}h_ix^i$. We will show that 
\[ \pi(K_{p,q},x+p)=\sum_{i=1}^{n}(-1)^{n-i}h_i(x+p)^i\]
is not Hurwitz quasi-stable. Rewriting $\pi(K_{p,q},x+p)=\sum_{i=1}^{n}a_ix^i$, we have
\begin{itemize}
\item $a_n=1$;
\item $a_{n-2}={n\choose 2}p^2-(n-1)ph_{n-1}+h_{n-2}$;
\item $a_{n-4}={n\choose 4}p^4-{n-1 \choose 3}p^{3}h_{n-1}+{n-2\choose 2}p^{2}h_{n-2}-(n-3)ph_{n-3}+h_{n-4}$.
\end{itemize}
Now we write $\pi(K_{p,q},x+p)=f^e(x^2)+xf^o(x^2)$. First, we suppose that $n$ is even and we look at the first three polynomials in the Sturm sequence $(f_{0},f_{1},f_{2},\ldots )$ of $f^e(t)$:
\begin{eqnarray*}
f_0 & = & t^{\frac{n}{2}}+a_{n-2}t^{\frac{n-2}{2}}+a_{n-4}t^{\frac{n-4}{2}}+\dots\\
f_1 & = & \frac{n}{2}t^{\frac{n-2}{2}}+a_{n-2}\frac{n-2}{2}t^{\frac{n-4}{2}}+a_{n-4}\frac{n-4}{2}t^{\frac{n-6}{2}}+\dots \\
f_2 & = & -\frac{2}{n^2}\Bigg(2na_{n-4}-(n-2)a_{n-2}^2\Bigg)t^{\frac{n-4}{2}}+\dots
\end{eqnarray*}

We can write $a_{n-4}$ and $a_{n-2}$ in terms of $p$ and $q$ by using Lemma \ref{lemmacompbip}, and then we can write $2na_{n-4}-(n-2)a_{n-2}^2$ as a quartic polynomial in $q$ where the coefficients are polynomial functions of $p$. More precisely, calculations show that $2na_{n-4}-(n-2)a_{n-2}^2$ is equal to

\begin{eqnarray*}
&&\left( \frac{1}{6}{p}^{2}-\frac{1}{6}p \right) {q}^{4}+ \left(\frac{1}{2}{p}^{4} -\frac{5}{3}{p}^{3}+{
\frac {11}{6}}\,{p}^{2}-\frac{2}{3} p \right) {q}^{3}+ \left( -\frac{5}{6} {p}^{5}+\frac{5}{3}{p}^{4}- \frac{5}{6} {p}^{3}-\frac{1}{3}{p}^{2}+\frac{1}{3} p \right) {q}^{
2}+ \\
&&\left( -\frac{1}{6}{p}^{8}+\frac{1}{3}{p}^{6}+\frac{1}{2}{p}^{5}-\frac{5}{6}{
p}^{4}-\frac{1}{6}{p}^{3}+\frac{1}{3}{p}^{2} \right) q + \left( -\frac{1}{6}{p}^{9}+\frac{1}{2}{p}^{8}-\frac{1}{3}{p}^{7} \right).
\end{eqnarray*}

Because $\frac{1}{6}p(p-1)> 0$  for fixed $p\geq 2$, it follows that the leading coefficient of $f_2$ is negative for all sufficiently large $q$.  Therefore, by Theorem \ref{allrootsreal}, we find that $f^e$ does not have all real roots and hence $\pi(K_{p,q},x+p)$ is not Hurwitz quasi-stable by Theorem \ref{hermite}. Thus, we obtain that
$\pi(K_{p,q},x)$ has a root $z$ with $Re(z)>p$ for all sufficiently large $q$ (that root cannot be a real number as we already noted that real chromatic roots are bounded by the tree-width of the graph). A similar argument works for $n$ odd but in this case one would work with the Sturm sequence of $f^o$ instead of $f^e$ (we leave the details to the reader).
\end{proof}

Since the tree-width of $K_{p,q}$ is equal to min$(p,q)$, the following corollary follows immediately.

\begin{corollary} For any integer $k\geq 2$, there exist infinitely many graphs which have tree-width $k$ and chromatic roots $z$ with $\Re(z)>k$.
\end{corollary}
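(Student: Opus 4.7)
The overall strategy is to transform the question about the real part of chromatic roots into one about Hurwitz (quasi-)stability: since $\Re(z) \leq p$ for every root $z$ of $\pi(K_{p,q},x)$ is equivalent to $\pi(K_{p,q}, x+p)$ being Hurwitz quasi-stable, it suffices to prove that the shifted polynomial fails to be quasi-stable for all sufficiently large $q$. The Hermite--Biehler theorem (Theorem \ref{hermite}) then reduces this failure to showing that the even part $f^e$ (when $n=p+q$ is even) or the odd part $f^o$ (when $n$ is odd) of $\pi(K_{p,q}, x+p)$ does not have only real nonpositive roots. To detect the failure of real-rootedness, we apply Sturm's theorem (Theorem \ref{allrootsreal}): if any polynomial in the Sturm sequence of $f^e$ (or $f^o$) has a negative leading coefficient, then $f^e$ (or $f^o$) has a non-real root.

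The plan is to first expand $\pi(K_{p,q}, x+p) = \sum_{i=1}^n a_i x^i$ using the binomial theorem applied to Whitney's broken-cycle expansion. Only a bounded number of the top coefficients $a_{n}, a_{n-1}, a_{n-2}, a_{n-3}, a_{n-4}$ will actually be needed, and these can be written in closed form as polynomials in $p,q$ by substituting the explicit expressions from Lemma \ref{lemmacompbip}. Splitting the shifted polynomial by parity gives $f^e(t)$ and $f^o(t)$, where $t=x^2$, each a polynomial in $t$ whose leading coefficients are precisely $a_n, a_{n-2}, a_{n-4}, \ldots$ (or $a_{n-1}, a_{n-3}, \ldots$ in the odd case).

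For $n$ even, the next step is to compute the first three entries $f_0 = f^e$, $f_1 = (f^e)'$, $f_2 = -\mathrm{rem}(f_1, f_0)$ of the Sturm sequence. A short computation with polynomial long division shows that the leading coefficient of $f_2$ is a negative constant multiple of
\[
2n a_{n-4} - (n-2) a_{n-2}^2,
\]
which, after substituting the expressions from Lemma \ref{lemmacompbip}, becomes a quartic polynomial in $q$ whose coefficients are polynomials in $p$. The $q^4$-coefficient turns out to be $\tfrac{1}{6}p(p-1)$, which is strictly positive for every fixed $p \geq 2$, so the whole expression is positive for all large $q$, making the leading coefficient of $f_2$ negative. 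Sturm's theorem then produces a non-real root of $f^e$, so by Hermite--Biehler $\pi(K_{p,q}, x+p)$ fails to be Hurwitz quasi-stable. The analogous argument handles $n$ odd, replacing $f^e$ by $f^o$ and using the coefficients $a_{n-1}, a_{n-3}$.

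To close the proof, we observe that the chromatic root $z$ with $\Re(z) > p$ produced by the failure of quasi-stability cannot itself be real: Thomassen's theorem bounds real chromatic roots by the tree-width, which for $K_{p,q}$ equals $\min(p,q) = p$ once $q \geq p$. The main technical obstacle is the bookkeeping in the Sturm step: confirming that the leading $q$-coefficient of the relevant discriminant-like expression is $\tfrac{1}{6}p(p-1)$ rather than something that vanishes or has the wrong sign, and verifying the parallel computation in the odd case. Beyond that, the argument is mechanical, relying only on the explicit coefficient formulas from Lemma \ref{lemmacompbip} and standard manipulations of Sturm sequences.
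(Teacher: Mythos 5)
Your proposal is correct and follows essentially the same route as the paper: the corollary is obtained from Theorem~\ref{tree-width thm} (whose proof you reproduce verbatim in outline — shift by $p$, Hermite--Biehler, the sign of the leading coefficient of $f_2$ via $2na_{n-4}-(n-2)a_{n-2}^2$ and its $q^4$-coefficient $\tfrac{1}{6}p(p-1)$, with Thomassen ruling out real roots) together with the observation that the tree-width of $K_{p,q}$ is $\min(p,q)=k$ for $p=k$ and $q\geq k$.
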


\subsection{Bounding the real part of complex chromatic roots by $n-1$}

In this section we will use another form of the chromatic polynomial which known as the factorial form \cite{read}. The chromatic polynomial of $G$ is equal to
$$\sum_{i=\chi(G)}^n a_{i}(x)_{\downarrow i}$$ where $(x)_{\downarrow i} = x(x-1) \cdots (x-i+1)$ is the {\em falling factorial of $x$}. The following lemma gives an interpretation of the coefficient $a_{n-i}$ in terms of the number of certain subgraphs in the complement of the graph.

\begin{lemma}\cite{ereysigma, li} Let $\pi(G,x)=\sum a_i (x)_{\downarrow i} $, then $a_{n-i}$ counts the number of subgraphs of the form $\cupdot_{j=1}^kK_{{m_j}+1}$ in $\overbar{G}$ where $\sum_{j=1}^km_j=i$ and $m_j\in \mathbb{Z}^+$.
\end{lemma}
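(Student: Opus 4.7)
The plan is to first derive the standard combinatorial interpretation of the coefficients of the factorial expansion in terms of partitions into independent sets of $G$, and then translate from $G$ to $\overbar{G}$ via a bookkeeping bijection that strips off singleton parts.

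Step one: a proper $x$-colouring of $G$ is equivalent to the data of (i) an unordered partition of $V(G)$ into some number $k$ of non-empty independent sets (the colour classes actually used), together with (ii) an injective assignment of these $k$ classes to distinct colours chosen from $\{1,\ldots,x\}$. Since the number of choices in (ii) is exactly $(x)_{\downarrow k}$, summation over $k$ yields
\[
\pi(G,x)=\sum_{k} P_k(G)\,(x)_{\downarrow k},
\]
where $P_k(G)$ denotes the number of unordered partitions of $V(G)$ into $k$ non-empty independent sets. Because $\{(x)_{\downarrow k}\}_{k\geq 0}$ is a basis of $\mathbb{R}[x]$, comparison with $\pi(G,x)=\sum a_i(x)_{\downarrow i}$ forces $a_k=P_k(G)$ for every $k$.

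Step two: take complements. A subset of $V(G)$ is independent in $G$ if and only if it is a clique in $\overbar{G}$, so partitioning $V(G)$ into $n-i$ non-empty independent sets is the same as choosing a spanning clique cover of $\overbar{G}$ with exactly $n-i$ parts. Hence $a_{n-i}$ equals the number of such spanning clique covers. To reach the statement, I would now observe that a spanning clique cover is uniquely determined by its non-singleton parts: given a cover with $k$ parts of sizes $m_1+1,\ldots,m_k+1$ (each $m_j\geq 1$) and $n-i-k$ singleton parts, the identity $n=(n-i-k)+\sum_{j=1}^k(m_j+1)$ forces $\sum_{j=1}^k m_j=i$. Conversely, any subgraph $\cupdot_{j=1}^{k}K_{m_j+1}$ of $\overbar{G}$ with $m_j\in\mathbb{Z}^+$ and $\sum m_j=i$ extends uniquely to a spanning clique cover with $n-i$ parts by declaring every remaining vertex of $\overbar{G}$ to be its own singleton. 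This bijection between spanning clique covers and the subgraphs described in the lemma yields the claimed count.

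The only content of substance is the first step; the rest is counting. The main pitfall I would watch for is keeping ordered versus unordered structures straight: $P_k(G)$ counts \emph{unordered} partitions, which is what pairs cleanly with the injective assignment $(x)_{\downarrow k}$, and the singleton-stripping map must be phrased so that the bijection (and the identity $\sum m_j=i$) is transparent and does not secretly over- or under-count.
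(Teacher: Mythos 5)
Your proof is correct, and it is essentially the standard argument underlying this result (which the paper only cites, from the $\sigma$-polynomial literature, rather than proving): identify $a_k$ with the number of partitions of $V(G)$ into $k$ non-empty independent sets via the $(x)_{\downarrow k}$ count of colour assignments, pass to clique partitions of $\overbar{G}$ by complementation, and strip singleton parts to get the subgraphs $\cupdot_{j=1}^{k}K_{m_j+1}$ with $\sum m_j=i$. Your handling of the two delicate points --- unordered partitions pairing with the falling factorial, and the singleton-stripping map being a genuine bijection --- is exactly what is needed, so there is nothing to add.
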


 From this lemma, we find that 
\begin{eqnarray*}
a_{n} & = & 1,\\
a_{n-1} & = & \eta_{\overbar{G}}(K_2) ~~ = ~~{n \choose 2} - |E(G)|,\\
a_{n-2} & = & \eta_{\overbar{G}}(K_3)+\eta_{\overbar{G}}(2K_2),\\
a_{n-3} & = & \eta_{\overbar{G}}(K_4)+\eta_{\overbar{G}}(K_3\cupdot \noindent K_2)+\eta_{\overbar{G}}(3K_2).
\end{eqnarray*}

We will need the following two results for the proof of Theorem \ref{n-3 realpart thm}. We ommit the proofs as the results are elementary.

\begin{lemma}\label{product-union}
Let $H$ and $K$ be two subgraphs of $G$, then
\[ \eta_G(H) \eta_G(K) \geq \eta_G(H\cupdot K).\tag*{\qed} \]
\end{lemma}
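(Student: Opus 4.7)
The plan is to prove the inequality by exhibiting an injection from the set of subgraphs of $G$ isomorphic to $H\cupdot K$ into the set of ordered pairs $(H',K')$ of subgraphs of $G$ with $H'\cong H$ and $K'\cong K$. Since the cardinality of the latter set is exactly $\eta_G(H)\,\eta_G(K)$ by definition, the existence of such an injection yields the desired bound $\eta_G(H\cupdot K)\leq \eta_G(H)\,\eta_G(K)$.

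To construct the injection $\Phi$, I would proceed as follows. Given any subgraph $S$ of $G$ with $S\cong H\cupdot K$, fix a decomposition $S=S_1\cupdot S_2$ into its connected pieces (or, more generally, into a vertex-disjoint union) such that $S_1\cong H$ and $S_2\cong K$; if $H\not\cong K$ this decomposition is forced, while if $H\cong K$ any consistent choice (for example, via a fixed ordering of the vertex set of $G$) works. Set $\Phi(S)=(S_1,S_2)$. To verify injectivity, observe that from the ordered pair $(S_1,S_2)$ we can reconstruct $S$ uniquely as the subgraph with vertex set $V(S_1)\cup V(S_2)$ and edge set $E(S_1)\cup E(S_2)$, so $\Phi(S)=\Phi(S')$ implies $S=S'$.

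Since every element in the image of $\Phi$ is an ordered pair $(H',K')$ with $H'\cong H$ and $K'\cong K$ (and moreover vertex-disjoint, which is more than we need), the image of $\Phi$ is a subset of the full collection of ordered pairs of subgraphs isomorphic to $H$ and $K$ respectively. Therefore
\[
\eta_G(H\cupdot K)=|\Phi(\text{domain})|\leq \eta_G(H)\,\eta_G(K),
\]
which completes the proof.

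There is essentially no serious obstacle here; the only point that requires a moment's care is the case $H\cong K$, where the labeling of the two components of $S$ as ``the $H$-part'' versus ``the $K$-part'' is not canonical. This is handled by making any fixed choice, since we only need an injection rather than a bijection. In fact, in that case the image of $\Phi$ misses the ``swapped'' pairs, so the bound is strict; but for the purposes of the lemma the weaker inequality stated suffices.
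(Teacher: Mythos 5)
Your proof is correct: the map $S\mapsto(S_1,S_2)$ is injective precisely because $S$ is recoverable as the union $S_1\cup S_2$, and this is exactly the elementary counting argument the paper has in mind when it states the lemma without proof. One small inaccuracy — when $H\not\cong K$ the decomposition of $S$ into an $H$-part and a $K$-part need not be forced (e.g.\ $H=K_2$, $K=2K_2$, so $S\cong 3K_2$ splits in three ways) — but this is immaterial, since, as you observe for the case $H\cong K$, any fixed choice of decomposition suffices for injectivity.
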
 

\begin{lemma}\label{same order comparison}
Let $H_1,H_2, \dots , H_k$ be subgraphs of $G$ and  $r=\sum_{i=1}^k |V(H_i)|$. Then,
\[\eta_{G}(\cupdot_{i=1}^k H_i)\geq \eta_{G}(K_r).\tag*{\qed}\]
\end{lemma}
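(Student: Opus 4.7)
The plan is to construct an explicit injection from the set of $K_r$-subgraphs of $G$ into the set of $\cupdot_{i=1}^k H_i$-subgraphs of $G$. The starting observation is that a copy of $K_r$ inside $G$ is completely determined by its vertex set: it corresponds to an $r$-element subset $S \subseteq V(G)$ such that $G[S]$ is complete, and the edge set of the copy is forced to be $\binom{S}{2}$. Hence $\eta_G(K_r)$ equals the number of such $r$-cliques $S$.

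Given any such $S$, the induced subgraph $G[S]$ is isomorphic to $K_r$, so it contains \emph{every} graph on $r$ vertices as a subgraph. Since $r = \sum_{i=1}^k |V(H_i)|$ is exactly the number of vertices of $\cupdot_{i=1}^k H_i$, there exists at least one subgraph of $G[S]$ isomorphic to $\cupdot_{i=1}^k H_i$. I would pick one such copy arbitrarily and call it $\Psi(S)$, producing a map from the set of $K_r$-subgraphs of $G$ to the set of $\cupdot_{i=1}^k H_i$-subgraphs of $G$.

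Injectivity is immediate: by construction, the vertex set of $\Psi(S)$ is exactly $S$, so two distinct $r$-cliques $S \neq S'$ yield images with distinct vertex sets and hence distinct subgraphs. The existence of this injection proves $\eta_G(\cupdot_{i=1}^k H_i) \geq \eta_G(K_r)$.

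There is no serious obstacle in this argument; the only thing to be careful about is the convention by which $\eta_G(\cdot)$ counts subgraphs (as pairs of vertex and edge sets rather than up to graph isomorphism of the ambient embedding), which ensures that subgraphs with different vertex sets are genuinely counted separately. This matches the authors' comment that the proof is elementary and may be omitted.
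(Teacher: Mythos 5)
Your proof is correct: the map sending each $r$-clique $S$ to a chosen spanning copy of $\cupdot_{i=1}^k H_i$ inside $G[S]$ (which exists because $K_r$ contains every graph on $r$ vertices) is injective since the image has vertex set exactly $S$, and this is precisely the elementary argument the authors omit. As a minor remark, your argument shows the hypothesis that the $H_i$ are subgraphs of $G$ is not actually needed.
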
 



\vspace{0.25in}

For our next result, we shall also need specific conditions for a low degree polynomial to be stable (see, for example, \cite[pg.181]{barbeau}).
 
\begin{theorem}[Stability tests for polynomials of degree $\leq 3$]\label{stability test}
The following are necessary and sufficient conditions for stability of polynomials of degree at most 3:
\begin{itemize}
\item A linear or quadratic polynomial is stable if and only if all the coefficients are of the same sign.
\item A cubic monic polynomial $f(x)=x^3+bx^2+cx+d$ is stable if and only if all its coefficients are positive and $bc>d$.
\end{itemize}
\end{theorem}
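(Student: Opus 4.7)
My plan is to handle the three degrees separately, using little more than Vieta's formulas; the cubic case, which is the main content, reduces cleanly to the quadratic case once a real root is peeled off.

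For the linear polynomial $a_1 x + a_0$, the unique root is $-a_0/a_1$, which lies in the open left half-plane iff $a_0/a_1 > 0$, i.e.\ iff $a_0$ and $a_1$ have the same sign. For the quadratic $a_2 x^2 + a_1 x + a_0$ (scaling by $a_2$ does not affect stability), I would split on the sign of the discriminant $a_1^2 - 4 a_0 a_2$: when the roots are real, by Vieta both are negative iff $a_1/a_2 > 0$ and $a_0/a_2 > 0$; when they are complex conjugate, their real part is $-a_1/(2 a_2)$, which is negative iff $a_1/a_2 > 0$, and in that sub-case $a_0/a_2 > a_1^2/(4 a_2^2) > 0$ automatically. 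Either way, stability is equivalent to $a_0, a_1, a_2$ all having the same sign.

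For the monic cubic $f(x) = x^3 + b x^2 + c x + d$, the key observation is that $f$ has at least one real root $r$, so we may factor $f(x) = (x - r)\, g(x)$ with
\[
  g(x) \;=\; x^2 + (b + r)\, x + \bigl(c + b r + r^2\bigr).
\]
Stability of $f$ is then equivalent to $r < 0$ together with stability of the quadratic $g$ (handled by the previous paragraph). A direct expansion of $(x-r)g(x)$ against $f(x)$ yields the identity
\[
  bc - d \;=\; (b + r)\bigl(c + r^2\bigr),
\]
which does most of the work. For the $(\Leftarrow)$ direction, assume $b, c, d > 0$ and $bc > d$: any real root satisfies $r < 0$ (else $f(r) \geq d > 0$); the relation $d = -r\,(c + b r + r^2)$ with $-r > 0$ forces $c + b r + r^2 > 0$; and since $c + r^2 > 0$ the identity forces $b + r > 0$. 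Hence $g$ is a stable quadratic, and $f$ is stable. Conversely, if $f$ is stable then $r < 0$ and $g$ is a stable quadratic, so $b + r > 0$ and $c + b r + r^2 > 0$; writing $b = (b+r) + (-r) > 0$ and $d = (-r)(c + b r + r^2) > 0$ yields two of the three positivity conditions, while $c > 0$ follows from $bc = d + (b+r)(c+r^2) > 0$ and $b > 0$; finally, the identity reads $bc - d = (b+r)(c+r^2) > 0$ as needed.

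The only genuine calculation is the identity $bc - d = (b+r)(c + r^2)$, which is a short bookkeeping exercise from the factorization; beyond that the argument is essentially a bridge between the quadratic and cubic cases, so I do not foresee a substantive obstacle.
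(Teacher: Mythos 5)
Your argument is correct, and it is worth noting that the paper itself offers no proof of this statement: it is quoted as a known low-degree instance of the Routh--Hurwitz criterion, with a citation to Barbeau's \emph{Polynomials}. The textbook route is typically via the Hurwitz determinant conditions (or, equivalently, Hermite--Biehler interlacing, which the paper uses elsewhere for Theorem \ref{hermite}); your route is more elementary and self-contained: Vieta's formulas settle degrees one and two, and the cubic case is reduced to the quadratic one by peeling off the guaranteed real root $r$ and exploiting the identity $bc-d=(b+r)(c+r^2)$, which is indeed the heart of the matter and which you verify correctly (from $f(x)=(x-r)\bigl(x^2+(b+r)x+(c+br+r^2)\bigr)$ one gets $d=-r(c+br+r^2)$, and then $bc-d=c(b+r)+r^2(b+r)$). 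One small step you leave implicit in the converse direction: to conclude $c>0$ from $bc=d+(b+r)(c+r^2)$ you need $c+r^2>0$, which at that point is not yet known since $c$ has not been shown positive; it follows in one line from $c+r^2=(c+br+r^2)+(-br)>0$, using $b>0$, $r<0$ and the already-established positivity of the middle coefficient of the quadratic factor. With that line added, your proof is complete, and it trades the determinantal machinery of the standard reference for a short factorization argument.
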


\vspace{0.25in}

We now conclude with showing that Conjecture~\ref{conjecture real n-1} holds for graphs with chromatic number at least $n-3$:

\begin{theorem}\label{n-3 realpart thm}
Let $G$ be a graph with $\chi(G)\geq n-3$. If $z$ is a root of $\pi(G,x)$ then $\Re(z)\leq n-1$ with equality if and only if $\chi(G)=n$.
\end{theorem}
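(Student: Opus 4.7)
The plan is to use the factorial form $\pi(G,x) = \sum_{i=\chi(G)}^n a_i (x)_{\downarrow i}$ to factor $\pi(G,x) = (x)_{\downarrow n-k} \cdot P_G(x)$, where $k := n - \chi(G) \in \{0,1,2,3\}$ and $P_G$ has degree $k$ with coefficients that are explicit non-negative integer combinations of $a_{n-k}, \ldots, a_n$. The roots of $(x)_{\downarrow n-k}$ are $0,1,\ldots,n-k-1$, and these realize the bound $n-1$ only when $k=0$, i.e.\ $G = K_n$, which handles the equality case. So for $k \in \{1,2,3\}$ it remains to show that each root of $P_G$ has real part strictly less than $n-1$, equivalently (by the shift observation in the preamble of Section~2) that $P_G(y+n-1)$ is Hurwitz stable, which I would check using Theorem \ref{stability test}.

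The cases $k=1$ and $k=2$ are short. A direct expansion in falling factorials gives $P_G(y+n-1) = y + a_{n-1}$ and $P_G(y+n-1) = y^2 + (1+a_{n-1})y + (a_{n-1}+a_{n-2})$ respectively. Since $\chi(G) < n$ forces $a_{n-1} = {n \choose 2} - |E(G)| \geq 1$, every coefficient is strictly positive, and stability follows at once from Theorem \ref{stability test}.

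The main work lies in the case $k=3$. Expanding $P_G(y+n-1)$ yields
\[
y^3 + (3+a_{n-1})\,y^2 + (2+3a_{n-1}+a_{n-2})\,y + (2a_{n-1}+2a_{n-2}+a_{n-3}),
\]
and all four coefficients are positive, because $\chi(G)=n-3$ forces $a_{n-1} \geq 1$ and $a_{n-3} \geq 1$. By Theorem \ref{stability test}, stability then reduces to the Routh--Hurwitz inequality $bc > d$, which after simplification becomes
\[
a_{n-3} \; < \; 6 + 9\,a_{n-1} + a_{n-2} + 3\,a_{n-1}^2 + a_{n-1}\,a_{n-2}.
\]

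Verifying this inequality is the key obstacle, and I would close it using the subgraph identities $a_{n-1} = \eta_{\overbar{G}}(K_2)$, $a_{n-2} = \eta_{\overbar{G}}(K_3)+\eta_{\overbar{G}}(2K_2)$ and $a_{n-3} = \eta_{\overbar{G}}(K_4)+\eta_{\overbar{G}}(K_3 \cupdot K_2)+\eta_{\overbar{G}}(3K_2)$ recorded earlier in the section. Lemma \ref{product-union} gives $\eta_{\overbar{G}}(K_3 \cupdot K_2) + \eta_{\overbar{G}}(3K_2) \leq a_{n-1}\,a_{n-2}$, and Lemma \ref{same order comparison} applied with $H_1 = H_2 = K_2$ yields $\eta_{\overbar{G}}(K_4) \leq \eta_{\overbar{G}}(2K_2) \leq a_{n-2}$. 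Combining these bounds gives $a_{n-3} \leq a_{n-2}(1+a_{n-1})$, and substituting reduces the target inequality to $0 < 6 + 9\,a_{n-1} + 3\,a_{n-1}^2$, which is manifest. Once the Routh--Hurwitz condition is verified the cubic is stable, and the theorem follows; the hardest step is choosing the right pair of combinatorial bounds on $a_{n-3}$, after which the calculation collapses.
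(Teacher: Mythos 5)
Your proposal is correct and follows essentially the same route as the paper: the factorization $\pi(G,x)=(x)_{\downarrow\chi(G)}P_G(x)$ is exactly the paper's $\pi(G,x+n-1)=f(G,x)\prod_{i=1}^{\chi(G)}(x+n-i)$, and the stability reduction via Theorem \ref{stability test} together with the bounds from Lemmas \ref{product-union} and \ref{same order comparison} matches the paper's argument for the Routh--Hurwitz inequality. The only cosmetic differences are that the paper disposes of $\chi(G)\in\{n,n-1\}$ by chordality rather than by the linear stability check, and it absorbs $\eta_{\overbar{G}}(K_4)$ into $3a_{n-1}^2$ via $(\eta_{\overbar{G}}(K_2))^2\geq\eta_{\overbar{G}}(2K_2)$ rather than into the $a_{n-2}$ term as you do.
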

\begin{proof} If $\chi(G)=n$ then $G=K_n$,  and if $\chi(G)=n-1$ then $G-u \cong K_{n-1}$ for some vertex $u$ of $G$. In both cases $G$ is chordal and hence $G$ has all integer roots, so the results follows as the largest integer roots is equal to $\chi(G)-1$. So we assume that $\chi(G)\geq n-2$.
We show that $\pi(G,x+n-1)$ is stable. First, we write
\begin{equation*}
\begin{split}
\pi(G,x+n-1)=f(G,x)  \prod_{i=1}^{\chi(G)}(x+n-i)       \\
\end{split}
\end{equation*}

and now it suffices to show that  $f(G,x)$ is  Hurwitz stable. Also, let $\pi(G,x)=\sum a_i(x)_{\downarrow i}$.

If $\chi(G)=n-2$, then 
\begin{equation*}
\begin{split}
f(G,x) & = a_{n-2}+a_{n-1}(x+1)+(x+1)x \\
& =  x^2+(1+a_{n-1})x+a_{n-1}+a_{n-2}        \\
\end{split}
\end{equation*}
Since all the coefficients are positive, the result is clear by Theorem \ref{stability test}.
Now, if $\chi(G)=n-3$, then
\begin{equation*}
\begin{split}
f(G,x) & = a_{n-3}+a_{n-2}(x+2)+a_{n-1}(x+2)(x+1)+(x+2)(x+1)x \\
& =  x^3+(3+a_{n-1})x^2+(2+3a_{n-1}+a_{n-2})x+2a_{n-1}+2a_{n-2}+a_{n-3}        \\
\end{split}
\end{equation*}
 Because $f(G,x)$ is a cubic polynomial with all coefficients positive, by Theorem \ref{stability test}, $f(G,x)$ is Hurwitz stable if and only if
$$(3+a_{n-1})(2+3a_{n-1}+a_{n-2}) > 2a_{n-1}+2a_{n-2}+a_{n-3}$$
which is equivalent to
\begin{equation}\label{stable cndtn for n-3}6+9a_{n-1}+a_{n-2}+3a_{n-1}^2+a_{n-1}a_{n-2} > a_{n-3}.\end{equation}
Observe that the left hand side of (\ref{stable cndtn for n-3}) is equal to
$$6+9\eta_{\overbar{G}}(K_2)+\eta_{\overbar{G}}(K_3)+\eta_{\overbar{G}}(2K_2)+3(\eta_{\overbar{G}}(K_2))^2+\eta_{\overbar{G}}(K_2)\cdot \eta_{\overbar{G}}(K_3)+\eta_{\overbar{G}}(K_2)\cdot \eta_{\overbar{G}}(2K_2).$$
By Lemma \ref{product-union}, $\eta_{\overbar{G}}(K_2)\eta_{\overbar{G}}(K_3)\geq \eta_{\overbar{G}}(K_3\cupdot K_2)$ and $\eta_{\overbar{G}}(K_2) \eta_{\overbar{G}}(2K_2)\geq \eta_{\overbar{G}}(3K_2)$. Also, by combining Lemma \ref{product-union} and Lemma \ref{same order comparison} we get $(\eta_{\overbar{G}}(K_2))^2\geq \eta_{\overbar{G}}(2K_2)\geq \eta_{\overbar{G}}(K_4)$.  Thus, the left hand side of (\ref{stable cndtn for n-3}) is strictly larger than
$$a_{n-3}=\eta_{\overbar{G}}(K_4)+\eta_{\overbar{G}}(K_3\cupdot K_2)+\eta_{\overbar{G}}(3K_2).$$ Hence, the inequality in (\ref{stable cndtn for n-3}) is established.
\end{proof}

\section{Concluding Remarks}
As we already mentioned, among all real chromatic roots of graphs with order $n \geq 9$, the largest non-integer real chromatic root is $\displaystyle{\frac{n-1-\sqrt{(n-3)(n-7)}}{2}}$. We pose the following question:

\begin{question}
Among all non-real chromatic roots of graphs with order $n$, what is the largest real part of a chromatic root of a graph of order $n$?
\end{question}

This problem seems more difficult and the answer must be at least $n-5/2$ (which is much bigger than the largest non-integer real root) as the graph $K_n-2K_2$ has non-real chromatic roots with real part equal to $n-5/2$. Indeed, we believe that this should be the true value.





Finally, we pose the following question:

\begin{question}
Let $G$ be a graph of order $n$. Is it true that if $z$ is a chromatic root of $G$ then $|z|\leq n-1$?
\end{question}

\vskip0.4in
\noindent {\bf \large Acknowledgments:} This research was partially supported by a grant from NSERC. 

\bibliographystyle{elsarticle-num}
\bibliography{<your-bib-database>}

\end{document}